\newtheorem{theorem}{Theorem}[section]
\newtheorem{lemma}{Lemma}[section]
\theoremstyle{definition}
\newtheorem{definition}{Definition}
\newtheorem{remark}{Remark}
\newtheorem{example}{Example}
\title{\huge \bf On extropy of past lifetime distribution\\ 
}
\author{
{\sc Osman Kamari}\\
 University of Human Development\\
 Sulaymaniyah, Iraq\\
\and {\sc Francesco Buono}\\
Universit\`a di Napoli Federico II\\
 Italy\\
}
\begin{document}
\date{\today}
\maketitle
\begin{abstract}
Recently Qiu et al. (2017) have introduced residual extropy as measure
of uncertainty in residual lifetime distributions analogues to residual
entropy (1996). Also, they obtained some properties and applications of that. In this
paper, we study the extropy to measure the uncertainty in a past lifetime
distribution. This measure of uncertainty is called past extropy. Also it is showed a characterization
result about the past extropy of largest order statistics.
\end{abstract}

\medskip\noindent
{\em Keywords:\/} 
Reversed residual lifetime, Past extropy, Characterization, Order statistics.

\medskip\noindent
{\em AMS Subject Classification}: 94A17, 62B10, 62G30

\section{Introduction}

The concept of Shannon entropy as a seminal measure of uncertainty for a random
variable was proposed by Shannon (1948). Shannon entropy $H(f)$ for a
non-negative and absolutely continuous random variable $X$ is defined as
follows:

\begin{equation}
H\left(f\right)=-\mathbb{E}[\log f(x)]=-\int_0^{+\infty}f\left(x\right)\log f\left(x\right)\ \mathrm dx,
\end{equation}
where F and f are cumulative distribution function (CDF) and probability density
function (pdf), respectively. There are huge literatures devoted to the
applications, generalizations and properties of Shannon entropy (see, e.g.
Cover and Thomas, 2006).

Recently, a new measure of uncertainty was proposed by Lad et al. (2015) called
extyopy as a complement dual of Shannon entropy (1948). For a  non-negative random
variable $X$ the extropy is defined as below:

\begin{equation}
J\left(X\right)=-\frac{1}{2}\int_0^{+\infty}f^2(x) \mathrm dx.
\end{equation}
It's obvious that $J\left(X\right)\leq 0.$

One of the statistical applications of extropy is to score the forecasting
distributions using the total log scoring rule.

Study of duration is an interest subject in many fields of science such as
reliability, survival analysis, and forensic science. In these areas, the
additional life time given that the component or system or a living organism has
survived up to time $t$ is termed the residual life function of the component. If $X$
is the life of a component, then  $X_t=\left(X-t|X>t\right)$  is called the
residual life function. If a component is known to have survived to age $t$ 
then extropy is no longer useful to measure the uncertainty of remaining 
lifetime of the component.

Therefore, Ebrahimi (1996) defined the entropy for resudial lifetime
$X_t=\left(X-t|X>t\right)$ as a dynamic form of uncertainty called the residual
entropy at time $t$ and defined as

\begin{equation*}
H(X;t)=-\int_t^{+\infty}\frac{f(x)}{\overline F (t)}\log \frac{f(x)}{\overline F(t)}\ \mathrm dx,
\end{equation*}

where $\overline F(t)=\mathbb P(X>t)=1-F(t)$ is the survival (reliability) funltion of $X$.

Analogous to residual entropy, Qiu et al. (2017) defined the extropy for
residual lifetime $X_t$ called the residual extropy at time $t$ and defined as
\begin{equation}
J\left(X_t\right)=-\frac{1}{2}\int_0^{+\infty}f_{X_t}^2(x) \mathrm dx=-\frac{1}{2\overline F^2(t)}\int_t^{+\infty}f^2(x) \mathrm dx.
\end{equation}

In many situations, uncertainty can relate to the past. Suppose the random
variable $X$ is the lifetime of a component, system or a living organism, having an
absolutely continuous distribution function $F_X(t)$ and the density function $f_X(t)$. For
$t>0$, let the random variable $_{t}X=(t-X|X<t)$ be the time elapsed after failure till time $t$,
given that the component has already failed at time $t$. We denote the random
variable $_{t}X$, the reversed residual life (past lifetime). For instance, at time $t$,
one has under gone a medical test to check for a certain disease. Suppose that
the test result is positive. If $X$ is the age when the patient was infected, then
it is known that $X<t$. Now the question is, how much time has elapsed since the
patient has been infected by this disease?  Based on this idea, Di Crescenzo and Longobardi (2002) introduced
the entropy of the reversed residual lifetime $_{t}X$ as a dynamic measure of
uncertainty called past entropy as follows:
\begin{equation*}
H\left(X;[t]\right)=-\int_0^t \frac{f(x)}{F (t)}\log \frac{f(x)}{F(t)}\ \mathrm dx.
\end{equation*}
This measure is dual of residual entropy introduced by Ebrahimi (1996).

In this paper, we study the extropy for $_{t}X$ as dual of residual extropy that is
called past extropy and it is defined as below (see also Krishnan et al. (2020)):
\begin{equation}
\label{eq4}
J\left(_{t}X\right)=-\frac{1}{2}\int_0^{+\infty}f_{_{t}X}^2(x) \mathrm dx=-\frac{1}{2 F^2(t)}\int_0^{t}f^2(x) \mathrm dx,
\end{equation}
where $f_{_{t}X}(x)=\frac{f(t-x)}{F(t)}$, for $x\in(0,t)$. It can be seen that for $t\ge0$, $J\left(_{t}X\right)$ possesses all the properties of $J(X)$.

\begin{remark}
It's clear that $J\left(_{+\infty}X\right)=J(X)$.
\end{remark}

Past extropy has applications in the context of information theory, reliability
and survival analysis, insurance, forensic science and other related fields
beceuse in that a lifetime distribution truncated above is of utmost importance.

The paper is organized as follows: in section 2, an approach to measure of
uncertainty in the past lifetime distribution is proposed. Then it is studied a characterization result with the reversed failure rate.
Following a characterization result is given based on past extropy of
the largest order statistics in section 3.

\section{Past extropy and some characterizations}

Analogous to residual extropy (Qiu et al. (2017)), the extropy for $_{t}X$ is called past extropy and for a non-negative random variable $X$ is as below:

\begin{equation}
\label{eq1}
J\left(_{t}X\right)=-\frac{1}{2} \int_0^{+\infty}f_{_{t}X}^2(x) \mathrm dx=-\frac{1}{2F^2 (t)} \int_0^t f^2 (x)\mathrm dx,
\end{equation}   
where $f_{_{t}X}(x)=\frac{f(t-x)}{F(t)}$, for $x\in(0,t)$ is the density function of $_{t}X$. It’s clear that $J({_t}X )\leq0$ while the residual entropy of a continuous distribution may take any value on $[-\infty,+\infty]$. Also, $J\left(_{+\infty}X\right)=J(X)$.

\begin{example}
\label{ex1}
\begin{itemize}
\item[a)] If $X\sim Exp(\lambda)$, then $J\left(_{t}X\right)=-\frac{\lambda}{4}\frac{1+\mathrm e^{-\lambda t}}{1-\mathrm e^{-\lambda t}}$ for $t>0$.
This shows that the past extropy of exponential distribution is an increasing function of t.
\item[b)] If $X\sim U(0,b)$, then $J\left(_{t}X\right)=-\frac{1}{2t}$.
\item[c)] If $X$ has power distribution with parameter $\alpha>0$, i.e. $f(x)=\alpha x^{(\alpha-1)}$, $0<x<1$, then $J\left(_{t}X\right)=\frac{-\alpha^2}{2(2\alpha-1)t}$.
\item[d)] If $X$ has Pareto distribution with parameters $\theta>0, x_0>0$, i.e. $f(x)=\frac{\theta}{x_0}\frac{x_0^{\theta+1}}{x^{\theta+1}}$, $x>x_0$, then $J\left(_{t}X\right)=\frac{\theta^2}{2(2\theta+1)(t^{\theta}-x_0^{\theta})^2}\left[\frac{x_0^{2\theta}}{t}-\frac{t^{2\theta}}{x_0}\right]$.
\end{itemize}
\end{example}

There is a functional relation between past extropy and residual extropy as follows:
\begin{equation*}
J(X)=F^2(t)J\left(_{t}X\right)+\overline F^2(t)J\left(X_t\right), \forall t>0.
\end{equation*}
In fact
\begin{eqnarray*}
F^2(t)J\left(_{t}X\right)+\overline F^2(t)J\left(X_t\right)&=&-\frac{1}{2}\int_t^{+\infty}f^2(x) \mathrm dx-\frac{1}{2}\int_0^{t}f^2(x) \mathrm dx\\
&=&-\frac{1}{2}\int_0^{+\infty}f^2(x) \mathrm dx=J(X).
\end{eqnarray*}


From~\eqref{eq4} we can rewrite the following expression for the past extropy:
\begin{equation*}
J\left(_{t}X\right)=\frac{-\tau^2(t)}{2f^2(t)}\int_0^{t}f^2(x) \mathrm dx,
\end{equation*}
where $\tau(t)=\frac{f(t)}{F(t)}$ is the reversed failure rate.

\begin{definition}
A random variable is said to be increasing (decreasing) in past extropy if $J\left(_{t}X\right)$ is an increasing (decreasing) function of $t$.
\end{definition}

\begin{theorem}
$J\left(_{t}X\right)$ is increasing (decreasing) if and only if  $J\left(_{t}X\right)\leq(\ge)\frac{-1}{4}\tau(t)$.
\end{theorem}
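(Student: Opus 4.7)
The plan is to characterize monotonicity of $J({_t}X)$ by computing its derivative with respect to $t$ and expressing it in a form where the sign is transparent.

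First, I would start from the defining formula
\[
J({_t}X)=-\frac{1}{2F^2(t)}\int_0^t f^2(x)\,\mathrm dx
\]
and differentiate with respect to $t$. Applying the quotient rule together with the fundamental theorem of calculus (to handle $\int_0^t f^2$) should give
\[
\frac{d}{dt}J({_t}X)=-\frac{f^2(t)}{2F^2(t)}+\frac{f(t)}{F^3(t)}\int_0^t f^2(x)\,\mathrm dx.
\]

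Next, the key algebraic trick is to recognize that $\int_0^t f^2(x)\,\mathrm dx = -2F^2(t)J({_t}X)$ straight from the definition, and that $f(t)/F(t)=\tau(t)$. Substituting both back into the derivative collapses it to
\[
\frac{d}{dt}J({_t}X)=-\tau(t)\left[\tfrac{1}{2}\tau(t)+2J({_t}X)\right].
\]
Since $\tau(t)\ge 0$, the sign of the derivative is opposite to the sign of the bracketed factor. Hence $J({_t}X)$ is increasing (resp.\ decreasing) in $t$ if and only if $\tfrac{1}{2}\tau(t)+2J({_t}X)\le 0$ (resp.\ $\ge 0$), which rearranges precisely to $J({_t}X)\le -\tfrac{1}{4}\tau(t)$ (resp.\ $\ge$).

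There is no real obstacle here: the proof is essentially a one-line differentiation followed by a substitution. The only thing to be careful about is the bookkeeping of signs and the factor of $2$ that appears when eliminating the integral via $J({_t}X)$, and the tacit assumption $F(t)>0$ (so that $\tau(t)$ is well defined), which holds for any $t$ in the support of $X$.
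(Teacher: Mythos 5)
Your proposal is correct and follows essentially the same route as the paper: differentiate the defining formula, substitute $\int_0^t f^2(x)\,\mathrm dx=-2F^2(t)J({_t}X)$ to obtain $\frac{\mathrm d}{\mathrm dt}J({_t}X)=-2\tau(t)J({_t}X)-\frac{1}{2}\tau^2(t)$, and conclude from $\tau(t)\ge 0$. The sign bookkeeping and the factor of $2$ all check out against the paper's computation.
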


\begin{proof}
From~\eqref{eq1} we get
\begin{equation*}
\frac{\mathrm d}{\mathrm dt}J\left(_{t}X\right)=-2\tau(t)J\left(_{t}X\right)-\frac{1}{2}\tau^2(t).
\end{equation*}
Then $J\left(_{t}X\right)$ is increasing if and only if
\begin{equation*}
2\tau(t)J\left(_{t}X\right)+\frac{1}{2}\tau^2(t)\leq0,
\end{equation*}
but $\tau(t)\ge0$ so this is equivalent to
\begin{equation*}
J\left(_{t}X\right)\leq-\frac{1}{4}\tau(t).
\end{equation*}
\end{proof}

\begin{theorem}
The past extropy $J\left(_{t}X\right)$ of $X$ is uniquely determined by $\tau(t)$.
\end{theorem}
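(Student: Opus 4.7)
The plan is to exploit the first-order linear ODE derived in the preceding theorem, namely
\begin{equation*}
\frac{\mathrm d}{\mathrm dt}J\left(_{t}X\right)=-2\tau(t)J\left(_{t}X\right)-\frac{1}{2}\tau^2(t),
\end{equation*}
which expresses $J\left(_{t}X\right)$ as a solution of an ODE whose inhomogeneous term and coefficient depend only on $\tau$. Uniqueness then reduces to showing that this linear equation admits a unique solution once an appropriate boundary condition at $t=0^{+}$ is imposed.

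Concretely, I would suppose that two absolutely continuous lifetimes $X_1,X_2$ share the same reversed failure rate $\tau$, write $J_i(t)=J\left(_{t}X_i\right)$, and form the difference $g(t)=J_1(t)-J_2(t)$. Subtracting the two ODEs shows that $g$ satisfies the homogeneous equation $g'(t)+2\tau(t)g(t)=0$. Since $\tau(t)=(\log F(t))'$ and $\tau$ determines $F$ via $F(t)=\exp\!\left(-\int_t^{+\infty}\tau(s)\,\mathrm ds\right)$, the integrating factor $\exp\!\left(2\int\tau(s)\,\mathrm ds\right)$ may be taken to be $F^2(t)$, a quantity recoverable from $\tau$ alone. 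Multiplying the homogeneous equation by $F^2(t)$ gives $(F^2 g)'(t)\equiv 0$, so $F^2(t)g(t)$ is a constant on $(0,+\infty)$.

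To pin down this constant, I would appeal to the identity
\begin{equation*}
F^2(t)J\left(_{t}X\right)=-\frac{1}{2}\int_0^{t}f^2(x)\,\mathrm dx,
\end{equation*}
which is immediate from the definition~\eqref{eq1}. The right-hand side tends to $0$ as $t\to 0^{+}$, hence $F^2(t)g(t)\to 0$ as $t\to 0^{+}$, forcing the constant to vanish. Therefore $g\equiv 0$ and $J_1=J_2$, establishing that $J\left(_{t}X\right)$ is determined by $\tau$.

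The main obstacle, as is typical in such characterization results, lies not in the ODE machinery itself but in identifying the correct boundary condition: the pointwise value of $J\left(_{t}X\right)$ at $t=0^{+}$ is generally $-\infty$ (as seen for instance from the exponential case in Example~\ref{ex1}(a)), so one cannot impose a naive initial condition on $J\left(_{t}X\right)$. The clean behaviour of the auxiliary quantity $F^2(t)J\left(_{t}X\right)$ at the origin is the crucial ingredient that replaces it.
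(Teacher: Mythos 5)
Your proposal is correct and runs on the same engine as the paper's proof: the first-order linear ODE $\frac{\mathrm d}{\mathrm dt}J\left(_{t}X\right)=-2\tau(t)J\left(_{t}X\right)-\frac{1}{2}\tau^2(t)$. The only substantive difference is where the constant of integration gets pinned down. The paper solves the ODE explicitly and imposes the condition at $t=+\infty$, arriving at formula \eqref{eq2}, which still contains $J(X)$ on the right-hand side; strictly speaking one must then add the observation (left implicit in the paper) that $J(X)$ is itself recoverable from $\tau$, via $F(t)=\exp\left(-\int_t^{+\infty}\tau(s)\,\mathrm ds\right)$ and $f=\tau F$. Your difference-of-solutions argument, normalized by the fact that $F^2(t)J\left(_{t}X\right)=-\frac{1}{2}\int_0^t f^2(x)\,\mathrm dx\to 0$ as $t\to 0^{+}$, avoids that extra step and also handles cleanly the blow-up of $J\left(_{t}X\right)$ at the origin, which you correctly identify as the reason a naive initial condition fails. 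Both routes are sound: yours is marginally more self-contained as a uniqueness proof, while the paper's has the advantage of producing the explicit closed-form representation \eqref{eq2}, which it then reuses to recompute the exponential example.
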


\begin{proof}
From~\eqref{eq1} we get
\begin{equation*}
\frac{\mathrm d}{\mathrm dt}J\left(_{t}X\right)=-2\tau(t)J\left(_{t}X\right)-\frac{1}{2}\tau^2(t).
\end{equation*}
So we have a linear differential equation of order one and it can be solved in the following way
\begin{equation*}
J\left(_{t}X\right)=\mathrm e^{-2\int_{t_0}^t \tau(s) \mathrm ds} \left[J\left(_{t_0}X\right) -\int_{t_0}^t \frac{1}{2}\tau^2(s)\mathrm e^{2\int_{t_0}^s \tau(y) \mathrm dy}\mathrm ds \right],
\end{equation*}
where we can use the boundary condition $J\left(_{+\infty}X\right)=J(X)$, so we get
\begin{equation}
\label{eq2}
J\left(_{t}X\right)=\mathrm e^{2\int_{t}^{+\infty} \tau(s) \mathrm ds} \left[J\left(X\right) +\int_{t}^{+\infty} \frac{1}{2}\tau^2(s)\mathrm e^{-2\int_{s}^{+\infty} \tau(y) \mathrm dy}\mathrm ds \right].
\end{equation}
\end{proof}

\begin{example}
Let $X\sim Exp(\lambda)$, with reversed failure rate $\tau(t)=\frac{\lambda \mathrm e^{-\lambda t}}{1-\mathrm e^{-\lambda t}}$. It follows from~\eqref{eq2} that
\begin{eqnarray*}
J\left(_{t}X\right)&=&\mathrm e^{2\int_{t}^{+\infty} \frac{\lambda \mathrm e^{-\lambda s}}{1-\mathrm e^{-\lambda s}} \mathrm ds} \left[J\left(X\right) +\int_{t}^{+\infty} \frac{1}{2}\frac{\lambda^2 \mathrm e^{-2\lambda s}}{\left(1-\mathrm e^{-\lambda s}\right)^2}\mathrm e^{-2\int_{s}^{+\infty} \frac{\lambda \mathrm e^{-\lambda y}}{1-\mathrm e^{-\lambda y}} \mathrm dy}\mathrm ds \right]\\
&=& \left(1-\mathrm e^{-\lambda t}\right)^{-2}\left[J(X)+\frac{1}{2}\int_t^{+\infty} \lambda^2\mathrm e^{-2\lambda s}\mathrm ds\right]\\
&=& \frac{\lambda}{4}\frac{\mathrm e^{-2\lambda t}-1}{\left(1-\mathrm e^{-\lambda t}\right)^2}=-\frac{\lambda}{4}\frac{1+\mathrm e^{-\lambda t}}{1-\mathrm e^{-\lambda t}},
\end{eqnarray*}
so we find again the same result of example~\ref{ex1}.
\end{example}

Using the following definition (see Shaked and Shanthikumar, 2007), we show that $J\left(_{t}X\right)$ is increasing in $t$.

\begin{definition}
Let $X$ and $Y$ be two non-negative variables with reliability functions $\overline F, \overline G$ and pdfs $f, g$ respectively. $X$ is smaller than $Y$
\begin{itemize}
\item[a)] in the likelihood ratio order, denoted by $X\leq_{lr}Y$, if $\frac{f(x)}{g(x)}$ is decreasing in $x\ge0$;
\item[b)] in the usual stochastic order, denoted by $X\leq_{st}Y$ if $\overline F(x)\leq\overline G(x)$ for $x\ge0$.
\end{itemize}
\end{definition}

\begin{remark}
It is well known that if $X\leq_{lr}Y$ then $X\leq_{st}Y$ and $X\leq_{st}Y$ if and only if $\mathbb E(\varphi(Y))\leq(\ge)\mathbb E(\varphi(X))$ for any decreasing (increasing) function $\varphi$.
\end{remark}

\begin{theorem}
\label{thm1}
Let $X$ be a random variable with CDF $F$ and pdf $f$. If $f\left(F^{-1}(x)\right)$ is decreasing in $x\ge0$, then $J\left(_{t}X\right)$ is increasing in $t\ge0$.
\end{theorem}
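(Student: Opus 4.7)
The plan is to invoke the preceding theorem and reduce the claim to a single integral inequality involving $f$ and $F$. By that theorem, $J(_{t}X)$ is increasing in $t$ if and only if $J(_{t}X)\leq-\tfrac14\tau(t)$. Plugging in the definition of $J(_{t}X)$ from~\eqref{eq1} and $\tau(t)=f(t)/F(t)$, a short manipulation shows that this is equivalent to
\begin{equation*}
2\int_0^t f^2(x)\,\mathrm dx \;\ge\; f(t)\,F(t),
\end{equation*}
valid for every $t\ge0$ at which $F(t)>0$. Equivalently, one can simply differentiate $J(_{t}X)$ directly and factor the result to arrive at the same inequality; I would mention both routes but proceed via the theorem, since it is already available.

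Next I would exploit the hypothesis by changing variables $u=F(x)$, $\mathrm du=f(x)\,\mathrm dx$, under which the left-hand side becomes $2\int_0^{F(t)} f(F^{-1}(u))\,\mathrm du$, while the right-hand side is $f(F^{-1}(F(t)))\,F(t)$. Writing $g(u):=f(F^{-1}(u))$ and $a:=F(t)$, the required inequality reads
\begin{equation*}
2\int_0^{a} g(u)\,\mathrm du \;\ge\; a\,g(a).
\end{equation*}
This is where the assumption enters: by hypothesis $g$ is decreasing on $[0,\infty)$, so $g(u)\ge g(a)$ for every $u\in[0,a]$. Integrating this pointwise bound gives $\int_0^{a} g(u)\,\mathrm du\ge a\,g(a)$, which is in fact twice what we need, so the inequality holds a fortiori.

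Chaining the two equivalences back, the condition of the theorem from above is satisfied for all $t\ge0$, hence $J(_{t}X)$ is increasing in $t$. The only delicate point — and really the only place where one should pause — is the legitimacy of the substitution $u=F(x)$ when $F$ is not strictly increasing on all of $[0,t]$; this is handled by restricting to the support of $X$, and by noting that on any interval where $f$ vanishes the integrals contribute nothing to either side. No obstacle beyond routine bookkeeping arises, so the argument is short; the role of the hypothesis $f\circ F^{-1}$ decreasing is exactly to turn the integral inequality into a pointwise one after the change of variable.
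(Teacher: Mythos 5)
Your argument is correct, but it takes a different route from the paper's. You invoke the earlier criterion ($J(_tX)$ is increasing iff $J(_tX)\leq-\frac{1}{4}\tau(t)$), reduce it via $u=F(x)$ to the integral inequality $2\int_0^a g(u)\,\mathrm du\geq a\,g(a)$ with $g=f\circ F^{-1}$ and $a=F(t)$, and then use the pointwise bound $g(u)\geq g(a)$ coming from the monotonicity hypothesis; the computation $J(_tX)\leq-\frac{1}{4}\tau(t)\Leftrightarrow 2\int_0^t f^2(x)\,\mathrm dx\geq f(t)F(t)$ checks out, and the observation that the hypothesis delivers twice the needed bound is a nice indication of the slack (consistent with the paper's remark that the condition is sufficient but not necessary). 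The paper instead avoids differentiation entirely: it writes $J(_tX)=-\frac{1}{2F(t)}\,\mathbb E\left[f\left(F^{-1}(U_t)\right)\right]$ with $U_t$ uniform on $(0,F(t))$, shows $U_{t_1}\leq_{lr}U_{t_2}$ hence $U_{t_1}\leq_{st}U_{t_2}$ for $t_1\leq t_2$, and compares the two expectations directly using the decreasingness of $f\circ F^{-1}$. Your version is more elementary (no stochastic-order machinery) and shorter, but it inherits from the earlier theorem the implicit assumption that $J(_tX)$ is differentiable in $t$, whereas the paper's direct comparison of $J(_{t_1}X)$ and $J(_{t_2}X)$ needs no such regularity; your closing caveat about the substitution $u=F(x)$ on intervals where $F$ is flat is handled correctly.
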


\begin{proof}
Let $U_t$ be a random variable with uniform distribution on $(0,F(t))$ with pdf $g_t(x)=\frac{1}{F(t)}$ for $x\in(0,F(t))$, then based on~\eqref{eq4} we have
\begin{eqnarray*}
J\left(_{t}X\right)&=&-\frac{1}{2F^2(t)} \int_0^{F(t)}f\left(F^{-1}(u)\right) \mathrm du=-\frac{1}{2F(t)} \int_0^{F(t)} g_t(u)f\left(F^{-1}(u)\right) \mathrm du\\
&=&-\frac{1}{2F(t)}\mathbb E\left[f\left(F^{-1}(U_t)\right) \right].
\end{eqnarray*}

Let $0\leq t_1\leq t_2$. If $0<x\leq F(t_1)$, then $\frac{g_{t_1}(x)}{g_{t_2}(x)}=\frac{F(t_2)}{F(t_1)}$ is a non-negative constant. If $F(t_1)<x\leq F(t_2)$, then $\frac{g_{t_1}(x)}{g_{t_2}(x)}=0$. Therefore $\frac{g_{t_1}(x)}{g_{t_2}(x)}$ is decreasing in $x\in(0,F(t_2))$, which implies $U_{t_1}\leq_{lr}U_{t_2}$. Hence $U_{t_1}\leq_{st}U_{t_2}$ and so
\begin{equation*}
0\leq\mathbb E\left[f\left(F^{-1}(U_{t_2})\right) \right]\leq\mathbb E\left[f\left(F^{-1}(U_{t_1}\right) \right]
\end{equation*}
using the assumption that $f\left(F^{-1}(U_t)\right)$ is a decreasing function. Since $0\leq\frac{1}{F(t_2)}\leq\frac{1}{F(t_1)}$ then
\begin{equation*}
J\left(_{t_1}X\right)=-\frac{1}{2F(t_1)}\mathbb E\left[f\left(F^{-1}(U_{t_1})\right) \right]\leq -\frac{1}{2F(t_2)}\mathbb E\left[f\left(F^{-1}(U_{t_2})\right) \right]=J\left(_{t_2}X\right).
\end{equation*}
\end{proof}

\begin{remark}
Let $X$ be a random variable with CDF $F(x)=x^2$, for $x\in(0,1)$. Then $f\left(F^{-1}(x)\right)=2\sqrt x$ is increasing in $x\in(0,1)$. However $J\left(_{t}X\right)=-\frac{2}{3t}$ is increasing in $t\in(0,1)$. So the condition in theorem~\ref{thm1} that $f\left(F^{-1}(x)\right)$ is decreasing in $x$ is sufficient but not necessary.
\end{remark}

\section{Past extropy of order statistics}

Let $X_1,X_2,\dots,X_n$ be a random sample with distribution function $F$, the order statistics of the sample are defined by the arrangement $X_1,X_2,\dots,X_n$ from the minimum to the maximum by $X_{(1)},X_{(2)},\dots,X_{(n)}$.
Qiu and Jia (2017) defined the residual extropy of the $i-th$ order statistics and showed that the residual extropy of order statistics can determine the underlying distribution uniquely.
Let $X_1,X_2,\dots,X_n$ be continuous and i.i.d. random variables with CDF $F$ indicate the lifetimes of $n$ components of a parallel system. Also $X_{1:n},X_{2:n},\dots,X_{n:n}$ be the ordered lifetimes of the components. Then $X_{n:n}$ represents the lifetime of parallel system with CDF $F_{X_{n:n}} (x)=(F(x))^n$, $x>0$. The CDF of $\left[t-X_{n:n}|X_{n:n}<t\right]$ is $1-\left(\frac{F(t-x)}{F(t)}\right)^n$ where $\left[t-X_{n:n}|X_{n:n}<t\right]$ is called reversed residual lifetime of the system. Now past  extropy for reversed residual lifetime of parallel system with distribution function $F_{X_{n:n}}(x)$ is as follows:
\begin{equation*}
J\left(_{t}X_{n:n}\right)=-\frac{n^2}{2(F(t))^{2n}} \int_0^t f^2 (x)[F(x)]^{2n-2}\mathrm dx.
\end{equation*}

\begin{theorem}
\label{thm7}
If $X$ has an increasing pdf $f$ on $[0,T]$, with $T>t$, then $J\left(_{t}X_{n:n}\right)$ is decreasing in $n\ge1$.
\end{theorem}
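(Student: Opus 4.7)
The plan is to reduce the theorem to showing monotonicity of a scalar sequence, then exploit a stochastic-order comparison enabled by the hypothesis that $f$ is increasing. First I would perform the successive changes of variable $u = F(x)$ and $v = u/F(t)$ inside the defining integral of $J\left(_{t}X_{n:n}\right)$, which brings it to the form
\begin{equation*}
J\left(_{t}X_{n:n}\right) = -\frac{a_n}{2F(t)}, \qquad a_n := n^2 \int_0^1 \phi(v) \, v^{2n-2} \, \mathrm{d}v,
\end{equation*}
where $\phi(v) := f(F^{-1}(F(t) v))$. Since $F(t)$ is a positive constant independent of $n$, the theorem reduces to showing that $a_n$ is non-decreasing in $n \geq 1$. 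A short composition check shows that $\phi$ is non-negative and increasing on $(0,1]$: the maps $v \mapsto F(t) v$ and $F^{-1}$ are increasing on their respective domains, and by hypothesis $f$ is increasing on $[0,T] \supset [0,t]$.

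The key step is to recognise a probabilistic structure in $a_n$. Let $V_n$ be the random variable on $(0,1)$ with density $(2n-1) v^{2n-2}$, equivalently with CDF $G_n(v) = v^{2n-1}$. Then
\begin{equation*}
a_n = \frac{n^2}{2n-1} \, \mathbb{E}[\phi(V_n)].
\end{equation*}
The scalar factor $n^2/(2n-1)$ is non-decreasing in $n \geq 1$, as direct differentiation gives derivative $2n(n-1)/(2n-1)^2 \geq 0$. For the expectation, the inequality $v^{2n+1} \leq v^{2n-1}$ for $v \in (0,1)$ yields $G_{n+1}(v) \leq G_n(v)$, so that $V_n \leq_{st} V_{n+1}$; combined with $\phi$ being non-negative and increasing, the Remark following the stochastic-order definition gives $0 \leq \mathbb{E}[\phi(V_n)] \leq \mathbb{E}[\phi(V_{n+1})]$.

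Since the product of two non-negative non-decreasing sequences is non-decreasing, $a_n$ is non-decreasing in $n$, and therefore $J\left(_{t}X_{n:n}\right) = -a_n/(2F(t))$ is non-increasing in $n$, as claimed. I expect the main obstacle to be pure bookkeeping: performing the two substitutions cleanly and recognising the family $(V_n)_n$ as stochastically monotone. No delicate analytic estimates are needed, and the hypothesis $T > t$ is used exactly to ensure that $\phi$ inherits the monotonicity of $f$ on the relevant interval $[0,t]$.
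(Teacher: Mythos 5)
Your proof is correct and follows essentially the same route as the paper's: after the substitution $u=F(x)$ your $V_n$ is exactly the paper's conditional maximum $\left(X_{2n-1:2n-1}\mid X_{2n-1:2n-1}\le t\right)$ pushed through $F(\cdot)/F(t)$, so your identity $J\left(_{t}X_{n:n}\right)=-\frac{n^2}{2(2n-1)F(t)}\mathbb{E}\left[\phi(V_n)\right]$ coincides with the paper's representation, and both arguments then combine a stochastic-order comparison (yours via direct CDF domination, the paper's via the likelihood ratio order) with monotonicity of the increasing integrand. Your handling of the scalar factor --- observing directly that $n^2/(2n-1)$ is non-decreasing and that a product of non-negative non-decreasing sequences is non-decreasing --- is in fact tidier than the paper's ratio bound, which carries a harmless typo in the factor $\frac{2n-1}{2n+1}$ (it should be $\frac{2n+1}{2n-1}$, still giving a product at most $1$) and misstates the sign of the past extropy as ``non-negative'' where ``non-positive'' is meant.
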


\begin{proof}
The pdf of $(X_{n:n}|X_{n:n}\leq t)$ can be expressed as
\begin{equation*}
g_{n:n}^t(x)=\frac{nf(x)F^{n-1}(x)}{F^n(t)}, \mbox{   } x\leq t.
\end{equation*}
We note that
\begin{equation*}
\frac{g_{2n-1:2n-1}^t(x)}{g_{2n+1:2n+1}^t(x)}=\frac{2n-1}{2n+1}\frac{F^2(t)}{F^2(x)}
\end{equation*}
is decreasing in $x\in[0,t]$ and so $(X_{2n-1:2n-1}|X_{2n-1:2n-1}\leq t)\leq_{lr}(X_{2n+1:2n+1}|X_{2n+1:2n+1}\leq t)$ which implies $(X_{2n-1:2n-1}|X_{2n-1:2n-1}\leq t)\leq_{st}(X_{2n+1:2n+1}|X_{2n+1:2n+1}\leq t)$. If $f$ is increasing on $[0,T]$ we have 
\begin{equation*}
\mathbb E\left[f\left(X_{2n-1:2n-1}\right)|X_{2n-1:2n-1}\leq t\right]\leq\mathbb E\left[f\left(X_{2n+1:2n+1}\right)|X_{2n+1:2n+1}\leq t\right].
\end{equation*}
From the definition of the past extropy it follows that
\begin{eqnarray*}
J\left(_{t}X_{n:n}\right)&=&-\frac{n^2}{2F^{2n}(t)} \int_0^t f^2 (x)F^{2n-2}(x)\mathrm dx\\
&=&\frac{-n^2}{2(2n-1)F(t)}\int_0^t \frac{(2n-1)F^{2n-2}(x)f(x)}{F^{2n-1}(t)}f(x) \mathrm dx\\
&=&\frac{-n^2}{2(2n-1)F(t)}\mathbb E\left[f\left(X_{2n-1:2n-1}\right)|X_{2n-1:2n-1}\leq t\right].
\end{eqnarray*}
Then it follows that
\begin{eqnarray*}
\frac{J\left(_{t}X_{n:n}\right)}{J\left(_{t}X_{n+1:n+1}\right)}&=&\frac{n^2}{(n+1)^2}\frac{2n-1}{2n+1}\frac{\mathbb E\left[f\left(X_{2n-1:2n-1}\right)|X_{2n-1:2n-1}\leq t\right]}{\mathbb E\left[f\left(X_{2n+1:2n+1}\right)|X_{2n+1:2n+1}\leq t\right]}\\
&\leq&\frac{\mathbb E\left[f\left(X_{2n-1:2n-1}\right)|X_{2n-1:2n-1}\leq t\right]}{\mathbb E\left[f\left(X_{2n+1:2n+1}\right)|X_{2n+1:2n+1}\leq t\right]}\leq1.
\end{eqnarray*}
Since the past extropy of a random variable is non-negative we have $J\left(_{t}X_{n:n}\right)\ge J\left(_{t}X_{n+1:n+1}\right)$ and the proof is completed.
\end{proof}

\begin{example}
\label{ex7}
Let $X$ be a random variable distribuited as a Weibull with two parameters, $X\sim W2(\alpha,\lambda)$, i.e. $f(x)=\lambda\alpha x^{\alpha-1}\exp\left(-\lambda x^{\alpha}\right)$. It can be showed that for $\alpha>1$ this pdf has a maximum point $T=\left(\frac{\alpha-1}{\lambda\alpha}\right)^{\frac{1}{\alpha}}$. Let us consider the case in which $X$ has a Weibull distribution with parameters $\alpha=2$ and $\lambda=1$, $X\sim W2(2,1)$ and so $T=\frac{\sqrt 2}{2}$. The hypothesis of the theorem~\ref{thm7} are satisfied for $t=0.5<T=\frac{\sqrt 2}{2}$. Figure 1 shows that $J\left(_{0.5}X_{n:n}\right)$ is decreasing in $n\in\{1,2,\dots,10\}$. Moreover the result of the theorem~\ref{thm7} does not hold for the smallest order statistic as shown in figure 2.
\begin{figure}[htbp]
\centering
\includegraphics[scale=0.5]{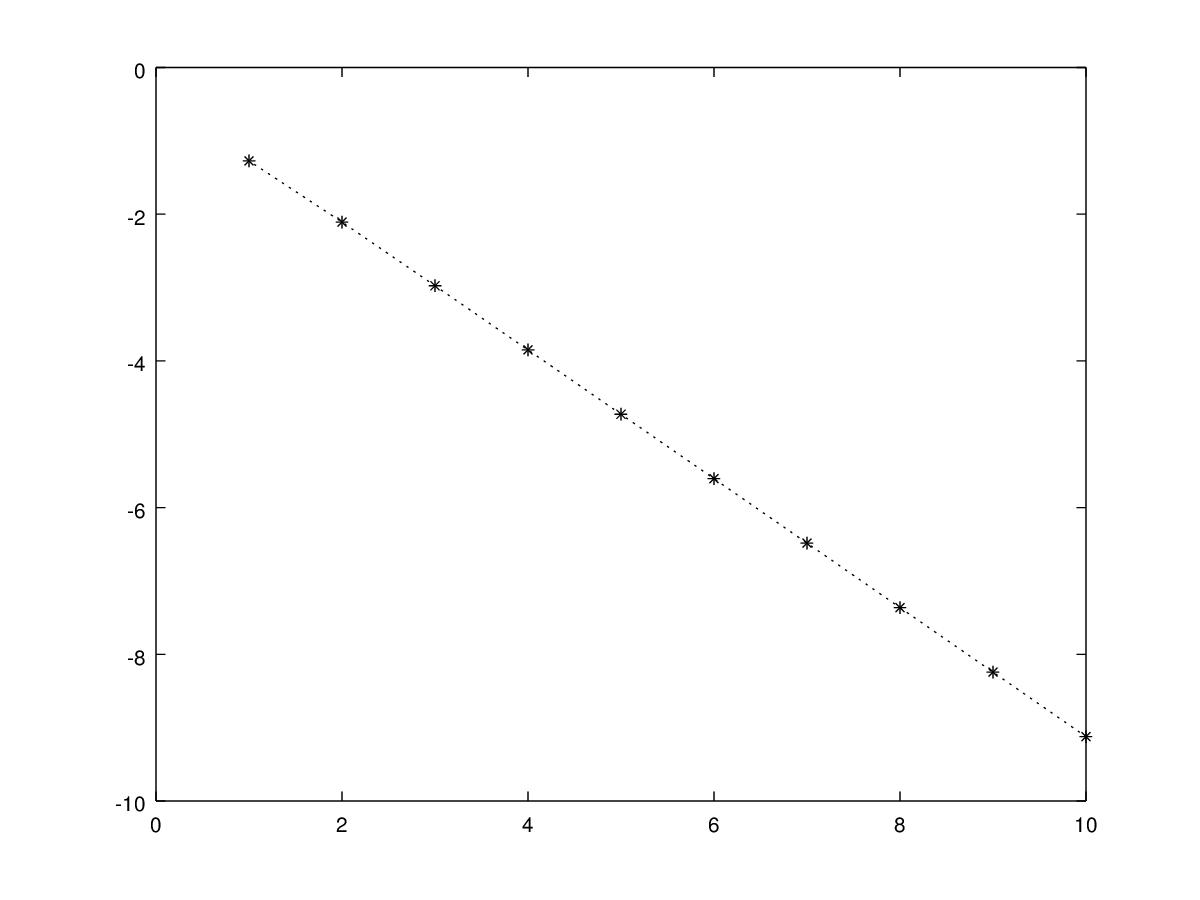}
\caption{$J\left(_{0.5}X_{n:n}\right)$ of a $W2(2,1)$ for $n=1,2,\dots,10$}
\end{figure}
\begin{figure}[htbp]
\centering
\includegraphics[scale=0.5]{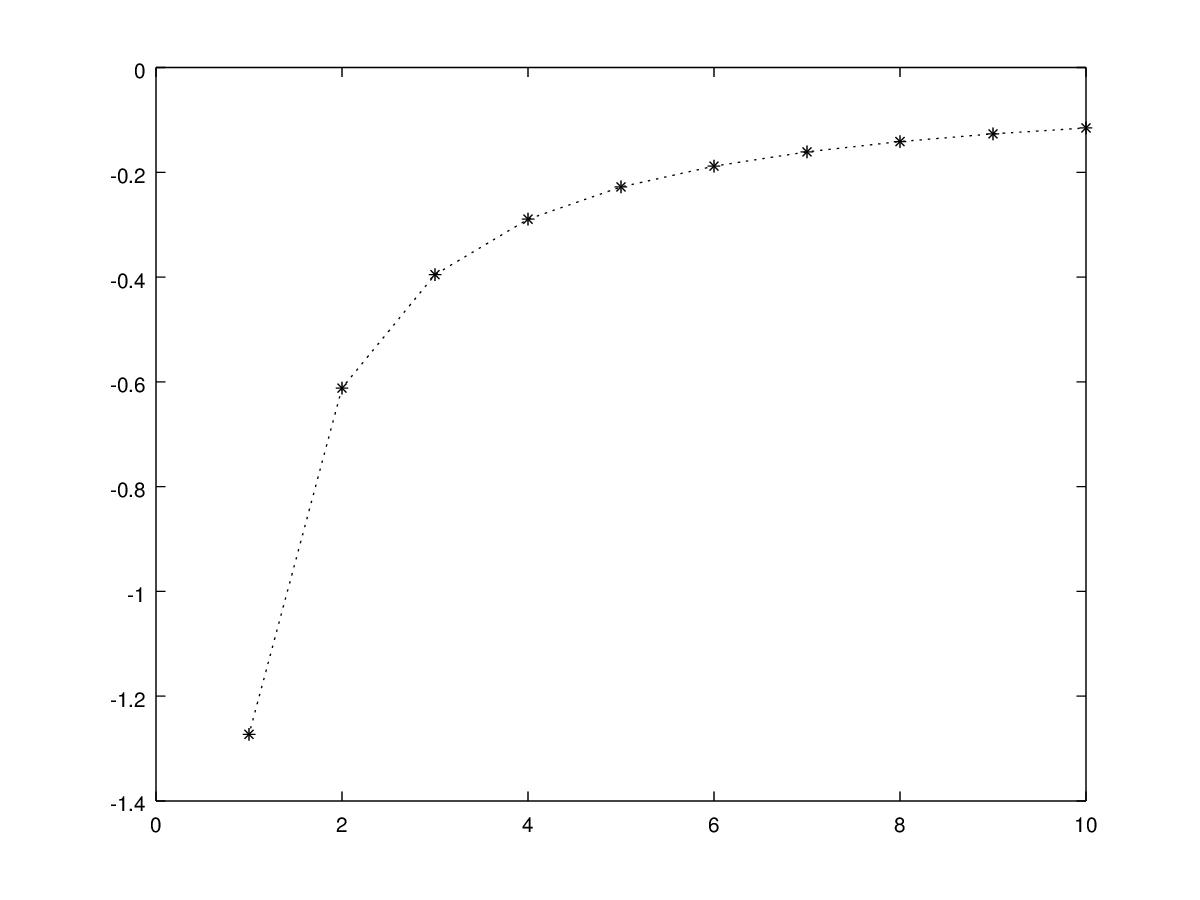}
\caption{$J\left(_{0.5}X_{1:n}\right)$ of a $W2(2,1)$ for $n=1,2,\dots,10$}
\end{figure}

\end{example}

In the case in which $X$ has an increasing pdf on $[0,T]$ with $T>t$ we give a lower bound for $J\left(_{t}X\right)$.

\begin{theorem}
\label{thm8}
If $X$ has an increasing pdf $f$ on $[0,T]$, with $T>t$, then $J\left(_{t}X\right)\ge -\frac{\tau(t)}{2}$.
\end{theorem}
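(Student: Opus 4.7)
My plan is to unfold the two definitions and reduce the claim to an elementary pointwise bound. From~\eqref{eq1} we have $J\left(_{t}X\right) = -\frac{1}{2F^2(t)}\int_0^t f^2(x)\,\mathrm dx$, and by definition $\tau(t) = f(t)/F(t)$. Multiplying the desired inequality $J\left(_{t}X\right) \ge -\tau(t)/2$ through by the negative quantity $-2F^2(t)$ (which reverses the inequality) converts it to the equivalent form
\[
\int_0^t f^2(x)\,\mathrm dx \;\le\; f(t)\,F(t).
\]

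The second step exploits the monotonicity assumption. Since $t < T$ and $f$ is increasing on $[0,T]$, every $x \in [0,t]$ satisfies $0 \le f(x) \le f(t)$. Multiplying the pointwise bound $f(x)\le f(t)$ by the non-negative factor $f(x)$ and integrating over $[0,t]$ yields
\[
\int_0^t f^2(x)\,\mathrm dx \;\le\; f(t) \int_0^t f(x)\,\mathrm dx \;=\; f(t)\,F(t),
\]
which is exactly the required estimate. The proof is then completed by tracing the equivalence back to $J\left(_{t}X\right) \ge -\tau(t)/2$.

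I do not foresee any real obstacle: the core argument is a single pointwise inequality followed by integration, and the hypothesis $T > t$ is used precisely to guarantee that $f(t)$ dominates $f$ on the whole interval $[0,t]$. The only point requiring a bit of care is the sign bookkeeping when clearing the negative prefactor $-1/(2F^2(t))$, which flips the direction of the inequality; but this is routine. It is also worth noting that this bound is consistent with the characterization given earlier in the paper, since the conclusion $J\left(_{t}X\right) \ge -\tau(t)/2 \ge -\tau(t)/4$ would force $J\left(_{t}X\right)$ to lie on the ``decreasing'' side of the threshold $-\tau(t)/4$ whenever $J\left(_{t}X\right) \le -\tau(t)/4$ fails, so the lower bound obtained here dovetails naturally with the monotonicity criterion.
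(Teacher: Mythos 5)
Your proof is correct, and it reaches the key inequality $\int_0^t f^2(x)\,\mathrm dx \le f(t)F(t)$ by a slightly different (and more elementary) route than the paper. The paper integrates by parts, writing
\begin{equation*}
J\left(_{t}X\right) = \frac{-f(t)}{2F(t)} + \frac{1}{2F^2(t)}\int_0^t F(x)f'(x)\,\mathrm dx,
\end{equation*}
and then discards the second term, which is non-negative because $f'\ge 0$ on $[0,t]$. Your argument instead uses the pointwise domination $f(x)\le f(t)$ for $x\in[0,t]$, multiplies by $f(x)\ge 0$, and integrates. The two arguments encode the same monotonicity information, but yours has the advantage of not requiring $f$ to be differentiable (the paper's manipulation implicitly assumes $f'$ exists and is integrable), and it avoids the boundary-term bookkeeping at $x=0$; both implicitly use $F(0)=0$, which is harmless for a non-negative absolutely continuous variable. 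One small slip in your closing aside: since $\tau(t)\ge 0$ you have $-\tau(t)/2 \le -\tau(t)/4$, not $\ge$, so the claimed ``dovetailing'' with the monotonicity threshold $-\tau(t)/4$ is stated backwards; this remark is not part of the proof and does not affect its validity, but you should either fix it or drop it.
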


\begin{proof}
From the definition we get
\begin{eqnarray*}
J\left(_{t}X\right)&=&-\frac{1}{2F^2 (t)} \int_0^t f^2 (x)\mathrm dx\\
&=& \frac{-f(t)}{2F(t)}+\frac{1}{2F^2(t)}\int_0^t F(x)f'(x)\mathrm dx\\
&\ge& -\frac{\tau(t)}{2}.
\end{eqnarray*}
\end{proof}

\begin{example}
Let $X\sim W2(2,1)$, as in example~\ref{ex7}, so we know that its pdf is increasing in $[0,T]$ with $T=\frac{\sqrt 2}{2}$. The hypothesis of the theorem~\ref{thm8} are satisfied for $t<T=\frac{\sqrt 2}{2}$. Figure 3 shows that the function $ -\frac{\tau(t)}{2}$ (in red) is a lower bound for the past extropy (in black). We remark that the theorem gives information only for $t\in[0,T]$, in fact for larger values of $t$ the function $ -\frac{\tau(t)}{2}$ could not be a lower bound anymore, as showed in figure 3.
\begin{figure}[htbp]
\centering
\includegraphics[scale=0.6]{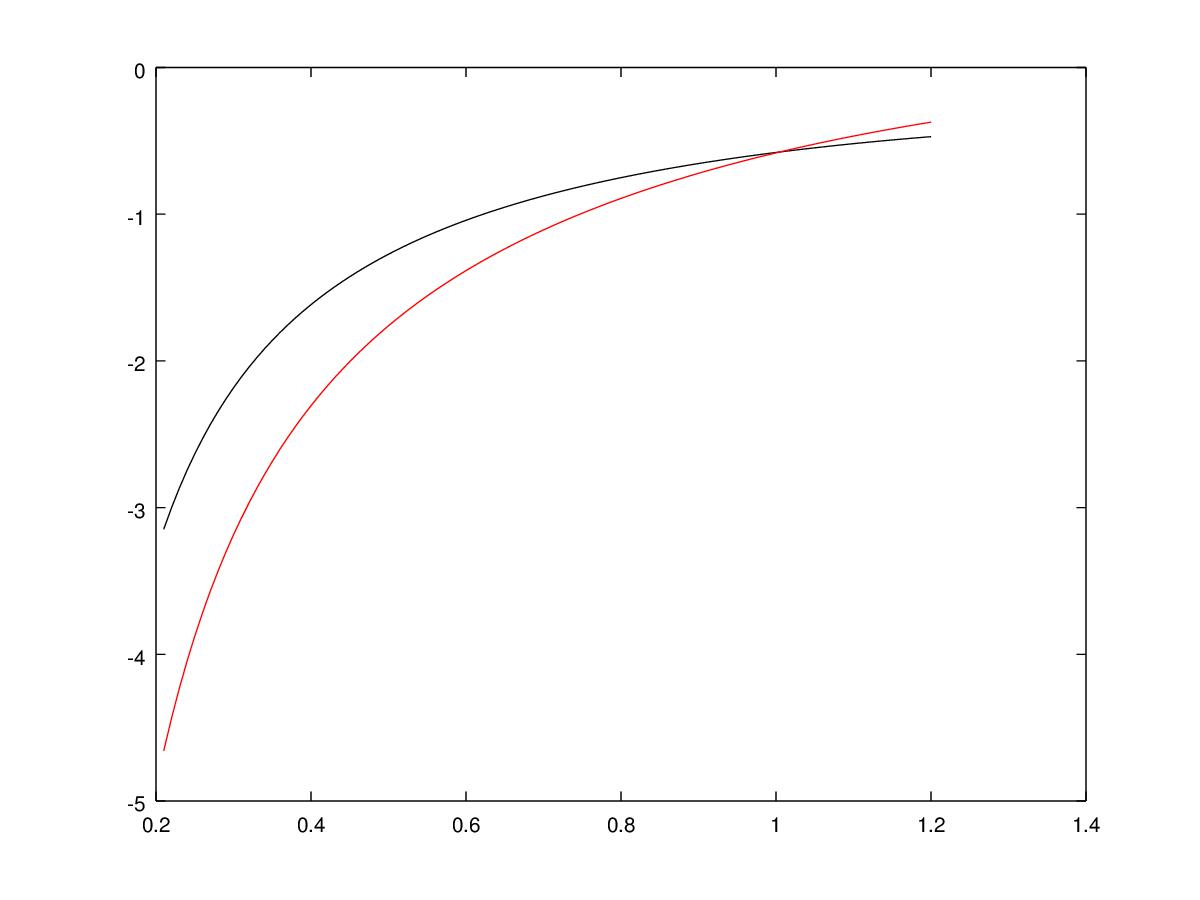}
\caption{$J\left(_{t}X\right)$ (in black) and $-\frac{\tau(t)}{2}$ (in red) of a $W2(2,1)$}
\end{figure}
\end{example}

Qiu (2016), Qiu and Jia (2017) showed that extropy of the $i-th$ order statistics and residual extropy of the $i-th$ order statistics can characterize the underlying distribution uniquely. In the following theorem, whose proof requires next lemma, we show that the past extropy of the largest order statistic can uniquely characterize the underlying distribution.

\begin{lemma}
\label{lem1}
Let $X$ and $Y$ be non-negative random variables such that $J\left(X_{n:n}\right)=J\left(Y_{n:n}\right)$, $\forall n\ge1$. Then $X\overset d=Y$.
\end{lemma}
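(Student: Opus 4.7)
The plan is to convert the hypothesis on $J(X_{n:n})$ into a Hausdorff moment problem on $[0,1]$. Since $X_{n:n}$ has density $nF^{n-1}(x)f(x)$, one has
\begin{equation*}
J(X_{n:n})=-\frac{n^2}{2}\int_0^{+\infty} F^{2n-2}(x) f^2(x)\,\mathrm dx,
\end{equation*}
and the natural substitution $u=F(x)$ (respectively $u=G(y)$ for $Y$, with CDF $G$ and pdf $g$) rewrites this as $-(n^2/2)\int_0^1 u^{2n-2} f(F^{-1}(u))\,\mathrm du$. The assumption $J(X_{n:n})=J(Y_{n:n})$ for every $n\ge1$ then becomes
\begin{equation*}
\int_0^1 u^{2n-2}\,h(u)\,\mathrm du=0,\qquad n=1,2,\ldots,
\end{equation*}
where $h(u):=f(F^{-1}(u))-g(G^{-1}(u))\in L^1(0,1)$, since both $f(F^{-1}(\cdot))$ and $g(G^{-1}(\cdot))$ integrate to $1$ on $(0,1)$.

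To exploit this, the next step is to pass from the even powers $u^{2n-2}$ to the full sequence of polynomial moments via the substitution $v=u^2$: writing $\psi(v):=h(\sqrt v)/(2\sqrt v)$, one checks $\psi\in L^1([0,1])$ and $\int_0^1 v^{n-1}\psi(v)\,\mathrm dv=0$ for every $n\ge1$. By the Hausdorff moment uniqueness theorem (a consequence of Weierstrass' density of polynomials in $C([0,1])$ combined with the Riesz--Markov representation of signed measures), this forces $\psi\equiv0$ a.e.\ on $[0,1]$, and hence $f(F^{-1}(u))=g(G^{-1}(u))$ for a.e.\ $u\in(0,1)$.

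Finally, using the identity $(F^{-1})'(u)=1/f(F^{-1}(u))$ and its analogue for $G^{-1}$, this last equality is exactly $(F^{-1})'(u)=(G^{-1})'(u)$, so $F^{-1}-G^{-1}$ is constant on $(0,1)$. Since both $X,Y$ are non-negative lifetime variables with support starting at $0$, the endpoint relations $F^{-1}(0^+)=G^{-1}(0^+)=0$ force this constant to vanish, yielding $F\equiv G$ and therefore $X\overset d=Y$. The main obstacle I expect is the moment-uniqueness step: the hypothesis only furnishes the even-indexed moments of $h$, so the $v=u^2$ substitution is essential in order to re-index them as the full Hausdorff moment sequence of an $L^1$-function on $[0,1]$; once $\psi\equiv0$ is secured, the remainder is a routine inversion of the change of variables and an appeal to the support hypothesis to pin down the additive constant.
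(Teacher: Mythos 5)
Your proposal is correct and follows essentially the same route as the paper: both reduce the hypothesis to the vanishing of all moments $\int_0^1 v^{n-1}\psi(v)\,\mathrm dv$ of an $L^1$ function on $[0,1]$ (your $\psi(v)=h(\sqrt v)/(2\sqrt v)$ is, after the change of variables, exactly the paper's $\frac{1}{2}\bigl[\tau_X\bigl(F_X^{-1}(\sqrt v)\bigr)-\tau_Y\bigl(F_Y^{-1}(\sqrt v)\bigr)\bigr]$), invoke moment/Weierstrass uniqueness (the paper cites Lemma 3.1 of Qiu (2017) for this step), and then integrate $(F^{-1})'=(G^{-1})'$ and use the common left endpoint $0$ to kill the additive constant. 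The only differences are cosmetic: you perform the substitution in two stages ($u=F(x)$, then $v=u^2$) where the paper uses $u=F^2(x)$ directly, and you spell out the Hausdorff moment argument rather than citing it.
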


\begin{proof}
From the definition of the extropy, $J\left(X_{n:n}\right)=J\left(Y_{n:n}\right)$ holds if and only if
\begin{equation*}
\int_0^{+\infty}F_X^{2n-2}(x)f_X^2(x)\mathrm dx=\int_0^{+\infty}F_Y^{2n-2}(x)f_Y^2(x)\mathrm dx
\end{equation*}
i.e. if and only if
\begin{equation*}
\int_0^{+\infty}F_X^{2n-2}(x)\tau_X(x)\mathrm dF^2_X(x)=\int_0^{+\infty}F_Y^{2n-2}(x)\tau_Y(x)\mathrm dF^2_Y(x).
\end{equation*}
Putting $u=F^2_X(x)$ in the left side of the above equation and $u=F^2_Y(x)$ in the right side we have
\begin{equation*}
\int_0^{1}u^{n-1}\tau_X\left(F_X^{-1}(\sqrt u)\right)\mathrm du=\int_0^{1}u^{n-1}\tau_Y\left(F_Y^{-1}(\sqrt u)\right)\mathrm du.
\end{equation*}
that is equivalent to
\begin{equation*}
\int_0^{1}u^{n-1}\left[\tau_X\left(F_X^{-1}(\sqrt u)\right)-\tau_Y\left(F_Y^{-1}(\sqrt u)\right)\right]\mathrm du=0 \mbox{  } \forall n\ge1.
\end{equation*}
Then from Lemma 3.1 of Qui (2017) we get $\tau_X\left(F_X^{-1}(\sqrt u)\right)=\tau_Y\left(F_Y^{-1}(\sqrt u)\right)$ for all $u\in(0,1)$. By taking $\sqrt u=v$ we have $\tau_X\left(F_X^{-1}(v)\right)=\tau_Y\left(F_Y^{-1}(v)\right)$ and so $f_X\left(F_X^{-1}(v)\right)=f_Y\left(F_Y^{-1}(v)\right)$ for all $v\in(0,1)$. This is equivalent to $(F_X^{-1})'(v)=(F_Y^{-1})'(v)$ i.e. $F_X^{-1}(v)=F_Y^{-1}(v)+C$, for all $v\in(0,1)$ with $C$ constant. But for $v=0$ we have $F_X^{-1}(0)=F_Y^{-1}(0)=0$ and so $C=0$.
\end{proof}

\begin{theorem}
Let $X$ and $Y$ be two non-negative random variables with cumulative distribution functions $F(x)$ and $G(x)$, respectively. Then $F$ and $G$ belong to the same family of distributions if and only if for $t\ge0$, $n\ge1$,
\begin{equation*}
J\left(_{t}X_{n:n}\right)=J\left(_{t}Y_{n:n}\right).
\end{equation*}
\end{theorem}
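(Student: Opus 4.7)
The ``only if'' direction is a triviality, since if $F=G$ then both sides of the identity agree by the very definition of past extropy. For the converse, the plan is to reduce the hypothesis to the setting of Lemma~\ref{lem1}, which has already characterized the distribution in terms of the ordinary extropies $J(X_{n:n})$, $n\ge 1$.

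The bridge will be to let $t\to+\infty$. Starting from
$$J\left(_{t}X_{n:n}\right) = -\frac{n^2}{2 F^{2n}(t)}\int_0^t f^2(x)F^{2n-2}(x)\,\mathrm dx,$$
and using that $F(t)\to 1$, I would observe that, since the pdf of $X_{n:n}$ is $n f(x) F^{n-1}(x)$,
$$\lim_{t\to+\infty} J\left(_{t}X_{n:n}\right) = -\frac{n^2}{2}\int_0^{+\infty} f^2(x) F^{2n-2}(x)\,\mathrm dx = J(X_{n:n}).$$
This is the natural extension to the $n$-th maximum of the identity $J(_{+\infty}X)=J(X)$ already noted after~\eqref{eq4}, and the analogous identity holds for $Y$ with $G$ in place of $F$.

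Letting $t\to+\infty$ on both sides of the hypothesis then produces $J(X_{n:n}) = J(Y_{n:n})$ for every $n\ge 1$, at which point Lemma~\ref{lem1} delivers $X\overset{d}{=}Y$, i.e.\ $F$ and $G$ belong to the same family. There is no genuine obstacle in this plan; the only technicality worth checking is the interchange of limit and integral in the display above, which is immediate from monotone convergence because the integrand $f^2(x)F^{2n-2}(x)$ is non-negative.
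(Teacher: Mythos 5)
Your proof is correct, but it takes a genuinely different route from the paper's. The paper keeps $t$ fixed: it observes that $J\left(_{t}X_{n:n}\right)$ is the ordinary extropy of the inactivity time of the parallel system, which (up to the reflection $x\mapsto t-x$, under which extropy is invariant) has the law of the largest order statistic of the right-truncated variable $[X\,|\,X<t]$; applying Lemma~\ref{lem1} to these truncated variables for each fixed $t$ yields $_{t}X\overset{d}{=}{}_{t}Y$, i.e. $F(t-x)/F(t)=G(t-x)/G(t)$ on $(0,t)$, and a further patching argument (ruling out $F=\alpha G$ with a constant $\alpha\neq 1$ by letting $t\to+\infty$) is then needed to conclude. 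You instead send $t\to+\infty$ first, identify the limit of $J\left(_{t}X_{n:n}\right)$ with $J\left(X_{n:n}\right)$ by monotone convergence together with $F(t)\to 1$, and invoke Lemma~\ref{lem1} once, directly on $X$ and $Y$. Your version is shorter, dispenses with the patching step entirely, and shows that only the asymptotic behaviour of the hypothesis in $t$ is actually needed; the paper's version extracts the stronger intermediate fact that the truncated distributions coincide for every finite $t$. The one caveat worth recording (which the paper also glosses over) is that if $\int_0^{+\infty}f^2(x)F^{2n-2}(x)\,\mathrm dx$ diverges, the limiting equality reads $-\infty=-\infty$ and Lemma~\ref{lem1} cannot be applied as stated, so one should assume the extropies of the maxima are finite.
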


\begin{proof}
It sufficies to prove the sufficiency. $J\left(_{t}X_{n:n}\right)$ is the past extropy for $X_{n:n}$ but it is also the extropy for the variable $_{t}X_{n:n}$. So through lemma~\ref{lem1} we get $_{t}X\overset d=\mbox{ } _{t}Y$. Then $\frac{F(t-x)}{F(t)}=\frac{G(t-x)}{G(t)}$, for $x\in(0,t)$. If exists $t'$ such that $F(t')\ne G(t')$ then in $(0,t')$ $F(x)=\alpha G(x)$ with $\alpha\ne1$. But for all $t>t'$, exists $x\in(0,t)$ such that $t-x=t'$ and so $F(t)\ne G(t)$ and as in the precedent step we have $F(x)=\alpha G(x)$ for $x\in(0,t)$. Letting $t$ to $+\infty$ we have a contradiction because $F$ and $G$ are both distribution function and their limit is 1.
\end{proof}

\section{Conclusion} 
In this paper we studied a measure of uncertainty, the past extropy. It is the extropy of the inactivity time. It is important in the moment in which with an observation we find our system down and we want to investigate about how much time has elapsed after its fail. Moreover we studied some connections with the largest order statistic.

\section{Acknowledgement}
Francesco Buono is partially supported by the GNAMPA research group of INdAM (Istituto Nazionale di Alta Matematica) and MIUR-PRIN 2017, Project "Stochastic Models for Complex Systems" (No. 2017 JFFHSH).

On behalf of all authors, the corresponding author states that there is no conflict of interest.

\end{document}